\newtheorem{theorem}{Theorem}
\newtheorem{lemma}[theorem]{Lemma}
\newtheorem{corollary}[theorem]{Corollary}
\theoremstyle{remark}
\newtheorem{remark}[]{Remark}
\newcommand\CC{{\mathbb C}}
\newcommand\FF{{\mathbb F}}
\newcommand\Fq{\FF_q}
\newcommand\Ftwo{\FF_2}
\newcommand\RR{{\mathbb R}}
\newcommand\ZZ{{\mathbb Z}}
\newcommand{\ph}{\phantom{-}}
\newcommand{\zb}{\overline{z}}
\title[Abelian varieties over ${\mathbb F}_2$]
      {Every positive integer is the order of \\ 
       an ordinary abelian variety over ${\mathbb F}_2$}
\date{25 June 2021}
\author[Howe]{Everett W. Howe}
\address[Howe]{Unaffiliated mathematician, 
         San Diego, CA 92104, USA}
\email{however@alumni.caltech.edu}
\author[Kedlaya]{Kiran S. Kedlaya}
\address[Kedlaya]{Department of Mathematics, 
                  University of California San Diego, 
                  La Jolla, CA 92093, USA}
\email{kedlaya@ucsd.edu}
\thanks{Kedlaya was supported by NSF grant DMS-1802161 and the UCSD Warschawski Professorship.}
\keywords{Abelian variety, group order, Weil polynomial, non-adjacent form}
\subjclass[2020]{Primary 11A67, 11G10; Secondary 14G15, 14K15}
\begin{document}

\begin{abstract}
We show that for every integer $m > 0$, there is an ordinary abelian variety
over~${\mathbb F}_2$ that has exactly $m$ rational points.
\end{abstract}

\maketitle

\section{Introduction}
\label{sec:intro}

The purpose of this paper is to prove the statement enunciated in its title,
namely that for every integer $m>0$ there is an ordinary abelian variety over
$\Ftwo$ that has exactly $m$ points over~$\Ftwo$. More specifically, we prove 
the following.

\begin{theorem}
\label{T:main}
Let $m > 0 $ and $d > 2$ be integers with $m < (4/3)2^d + 1.$ Then there is a
squarefree ordinary abelian variety $A$ over $\Ftwo$ of dimension at most $d$
with $m = \# A(\Ftwo)$.
\end{theorem}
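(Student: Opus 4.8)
The plan is to translate the theorem into a statement about real polynomials and then prove that statement using non-adjacent forms. Recall that an abelian variety $A/\Ftwo$ of dimension $g$ has a characteristic polynomial of Frobenius $f_A\in\ZZ[x]$ of degree $2g$ with $\#A(\Ftwo)=f_A(1)$, and that $f_A(x)=x^g\,h(x+2/x)$ for a monic ``real Weil polynomial'' $h\in\ZZ[x]$ of degree $g$ all of whose roots lie in $[-2\sqrt2,2\sqrt2]$; so $\#A(\Ftwo)=f_A(1)=h(3)$. The first step is the standard dictionary: $\dim A=\deg h$; $A$ is ordinary exactly when the middle coefficient of $f_A$ is odd, which a short computation (comparing $f_A=\prod_j(x^2-\beta_jx+2)$ with $h=\prod_j(x-\beta_j)$ modulo $2$) shows is equivalent to $h(0)$ being odd; and, by the Honda--Tate theorem together with the theory of ordinary abelian varieties (the ordinariness makes the relevant Brauer invariants vanish, so no multiplicities intervene), every monic $h\in\ZZ[x]$ with all roots in $(-2\sqrt2,2\sqrt2)$ and $h(0)$ odd is the real Weil polynomial of an ordinary abelian variety over $\Ftwo$, which is squarefree precisely when $h$ is squarefree. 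So it suffices to produce, for $m$ and $d$ as in the theorem, a monic squarefree $h\in\ZZ[x]$ with $\deg h\le d$, all roots in $(-2\sqrt2,2\sqrt2)$, $h(0)$ odd, and $h(3)=m$.

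I would prove this by strong induction on $m$, using the non-adjacent form of $m$ to organize the construction. Write $m=\sum_{j=1}^{w}\varepsilon_j2^{a_j}$ with $\varepsilon_j\in\{\pm1\}$, $a_1>a_2>\cdots>a_w\ge0$, and $a_j-a_{j+1}\ge2$; since $m>0$, $\varepsilon_1=1$. A direct count shows that $\lfloor(4/3)2^d\rfloor$ is exactly the largest integer whose non-adjacent form is supported on the positions $0,1,\dots,d$, so the hypothesis $m<(4/3)2^d+1$ gives $a_1\le d$ except for the single value $m=\lfloor(4/3)2^d\rfloor+1$, which is either even or odd and reduces (via the peeling steps below) to a strictly smaller instance of the theorem in dimension $d-1$. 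The model construction is $h(x)=H(x-1)$ with $H(y)=y^{a_1}+\sum_{j\ge2}\varepsilon_jy^{a_j}$: then $h(3)=H(2)=m$ and $\deg h=a_1\le d$, and the gap $a_1-a_2\ge2$ forces any root $y_0$ of $H$ to satisfy $|y_0|^{a_1}\le\sum_{i=0}^{a_1-2}|y_0|^i$, hence $|y_0|<\phi=(1+\sqrt5)/2$, so the roots of $h$ lie in $(1-\phi,1+\phi)\subset(-2\sqrt2,2\sqrt2)$. Thus the degree bound, the location of the roots, and the point count all come for free from the sparsity of the non-adjacent form.

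The real content is that $H$ above can fail to be squarefree (for $m=2^k$ it is $(x-1)^k$), can fail to be totally real (for $m=11$ the form $16-4-1$ gives $H=y^4-y^2-1$, with a complex pair), and can have $h(0)=H(-1)$ even (its parity is $w\bmod 2$, which is often even, as for $m=3$ or $m=5$). I would repair all three at once in the inductive step by peeling: split off the $2$-primary part of $m$ using the pairwise non-associate totally real factors $x-1$, $x+1$, $x^2-1$ (values $2,4,8$ at $x=3$), and reduce the odd core of $m$ to a strictly smaller integer at the cost of one or two further factors taken from a fixed stock of totally real irreducible quadratics and cubics with odd constant term and small values at $3$ --- for instance $x^2-x-3$, $x^2-x-1$, $x^2-3$, $x^2+x-5$, $x^2+x-3$, $x^3-x^2-2x+1$ (values $3,5,6,7,9,13$), whose roots sit well inside $(-2\sqrt2,2\sqrt2)$ --- arranging at each stage that the polynomial produced is squarefree, totally real with roots in the interval, and has odd constant term. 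Finitely many small values of $m$ are then checked by exhibiting explicit polynomials of degree at most $\min(d,3)$, which is where the hypothesis $d>2$ is used.

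The hard part is precisely this bookkeeping. Additive corrections are what one needs to make $h(3)$ land exactly on $m$, but adding even a low-degree term to a product of real linear factors readily creates complex-conjugate pairs or pushes a real root past $\pm2\sqrt2$, and the admissible window is narrow because $q=2$; multiplicative corrections, by contrast, cannot alter the parity of $h(0)$ and threaten squarefreeness. The non-adjacent form is exactly the device that keeps this under control --- its sparsity bounds the magnitudes of the corrections (this is what produced the bound $|y_0|<\phi$ above), and the forced zero between consecutive nonzero digits is what lets the recursion close up cleanly.
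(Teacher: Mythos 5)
Your reduction is sound: passing to the real Weil polynomial $h$ (so that $\#A(\Ftwo)=h(3)$, ordinariness becomes $h(0)$ odd via $f\equiv x^g h \pmod 2$, and squarefreeness of $h$ with roots strictly inside $[-2\sqrt2,2\sqrt2]$ gives a squarefree ordinary isogeny class by Honda--Tate) is a correct reformulation of the theorem, and the non-adjacent form is indeed the right combinatorial device. But the proof itself has a genuine gap at its center. Your model polynomial $H(y)=y^{a_1}+\sum_{j\ge 2}\varepsilon_j y^{a_j}$ only comes with the bound $\lvert y_0\rvert<\phi$ on the \emph{modulus} of its roots; the dictionary you set up requires $h$ to be \emph{totally real}, and a modulus bound says nothing about reality (your own example $m=11$ shows the failure). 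Your proposed repair is multiplicative --- peel off factors from a fixed stock of totally real polynomials with prescribed values at $3$ --- but values at $3$ multiply, so a product of stock factors can only realize $m$ lying in the multiplicative semigroup generated by $2,3,4,5,\dots,13$, not arbitrary $m$; moreover the stock is not even coprime ($x^2-1=(x-1)(x+1)$), so squarefreeness and the degree bound are also uncontrolled. You acknowledge that additive corrections are what is actually needed and that controlling their effect on reality of the roots, on the interval $(-2\sqrt2,2\sqrt2)$, on squarefreeness, and on the parity of $h(0)$ is ``the hard part,'' but that bookkeeping is never carried out; as written, the argument proves the theorem only for the finitely many small $m$ you would verify by hand and for the special values reachable by your stock. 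The treatment of the boundary case $m=\lfloor(4/3)2^d\rfloor+1$ is likewise only asserted.

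The missing ingredient is exactly an analytic criterion that converts a smallness condition on \emph{coefficients} into the root conditions you need, so that one never has to exhibit totally real factorizations at all. The paper works directly with the coefficients $a_1,\dots,a_n$ of the Weil polynomial $f$ and uses the DiPippo--Howe bound (reproved there by an interlacing/sign-change argument on the circle $\lvert z\rvert=\sqrt2$): if the weighted sum $\lvert a_n\rvert/(2\cdot 2^{n/2})+\sum_{i<n}\lvert a_i\rvert/2^{i/2}$ is less than $1$ and $a_n$ is odd, then $f$ automatically has $2n$ distinct roots on the circle and is ordinary and squarefree. This reduces the theorem to a purely additive representation problem, $m=(2^n+1)+\sum_i a_i(2^{n-i}+1)$ with small weight and $a_n$ odd, which is then solved by an induction on $d$ (subtract $2^D+1$, apply the inductive hypothesis to $\lvert m-(2^D+1)\rvert$, and shift the resulting sequence right by at least two places --- this is where the NAF-style gap enters), with a short table for $d=3,4$. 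If you want to salvage your approach, you should import such a coefficient criterion (either on $f$ or an analogous statement bounding the coefficients of $h$ so that $h$ is forced to be totally real with distinct roots in the open interval); with it, your NAF expansion can be used essentially as in the paper, and the multiplicative peeling becomes unnecessary.
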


(We say that an abelian variety is \emph{squarefree} if its decomposition up to
isogeny as a product $\prod B_i^{e_i}$ of powers of mutually non-isogenous
simple abelian varieties does not include any factors with $e_i > 1$.)

The supersingular elliptic curve $E/\Ftwo$ given by $y^2 + y = x^3 + x + 1$
satisfies $\#E(\Ftwo) = 1$. If we have an $n$-dimensional abelian variety
$A/\Ftwo$ with $m = \# A(\Ftwo)$, then by considering products $A \times E^e$ we
see that there are abelian varieties over $\Ftwo$ of every dimension greater
than or equal to $n$ that have $m$ points. As $A \times E^e$ is not simple when
$e > 0$, this leads us to a question of Kadets~\cite{Kadets2021}: For a given
positive integer~$m$, do there exist infinitely many \emph{simple} abelian
varieties $A$ over $\Ftwo$ with $\#A(\Ftwo) = m$? For $m=1$ the answer is known
to be \emph{yes}, thanks to the classification of such varieties given by Madan
and Pal~\cite{MadanPal1977}; moreover, infinitely many of these varieties are
ordinary. Resolving this question for $m>1$ probably requires a better
understanding of the space of Weil polynomials; see Section~\ref{sec:comments}.

One might ask whether there is a result analogous to Theorem~\ref{T:main} for 
other finite fields. Since the number of rational points on a $d$-dimensional
abelian variety over $\Fq$ lies in the interval
$[(\sqrt{q}-1)^{2d}, (\sqrt{q}+1)^{2d}]$, we see that for $q\ge 7$ the integer
$m = 2$ is too small to be the group order of an abelian variety over $\Fq$ of 
positive dimension, and it is not equal to the order of the unique
$0$-dimensional abelian variety over~$\Fq$.

On the other hand, building on our techniques, van Bommel, Costa, Li, Poonen,
and Smith \cite{vanBommelCostaEtAl2021} have shown that for $q \leq 5$, every
positive integer \emph{is} the order of an abelian variety over $\Fq$. The same
holds for ordinary abelian varieties with a single exception: for $q=4$ the
order $3$ cannot occur.

The work of van Bommel et al.~also shows that for all $q$, every
\emph{sufficiently large} positive integer is the order of an abelian variety
over~$\Fq$, which can also be taken to be ordinary, geometrically simple, and
principally polarizable. However, for $q > 2$ requiring both ordinariness and
geometric simplicity makes it impossible to achieve \emph{all} orders. Combining
\cite[Theorem~3.2]{Kadets2021} with data from the LMFDB \cite{LMFDB}, one can
prove the following:
\begin{itemize}
\item There is no geometrically simple ordinary abelian variety over $\FF_3$
      with $4$ points.
\item There is no geometrically simple ordinary abelian variety over $\FF_4$
      with $7$ points. 
\item There is no geometrically simple ordinary abelian variety over $\FF_5$
      with $6$ points.
\end{itemize}
We do not know whether every positive integer $m$ is the order of an ordinary,
geometrically simple, principally polarizable abelian variety over $\Ftwo$. One
way to check this would be to make the ``sufficiently large'' condition
effective (this is doable in principle but not carried out in
\cite{vanBommelCostaEtAl2021}), and then to tabulate isogeny classes of abelian
varieties over $\Ftwo$ as far as is needed to close the gap. (The LMFDB
currently contains all isogeny classes of abelian varieties over $\Ftwo$ of
dimension at most~$6$).

One can also ask about group \emph{structures} rather than group \emph{orders}.
Addressing this question requires additional ideas, because the group structure
of an abelian variety over a finite field is not an isogeny invariant. Using
Theorem~\ref{T:main} as a starting point, Marseglia and
Springer~\cite{MarsegliaSpringer2021} show that every finite abelian group is
isomorphic to $A(\FF_2)$ for some ordinary abelian variety $A$ over~$\FF_2$.

\subsubsection*{Acknowledgments}
The authors thank Francesc Fit\'e for questions and observations that led to
this research, Bjorn Poonen for his short clear proof of 
Lemma~\ref{L:EH condition}, and Stefano Marseglia and Caleb Springer for
observing that our construction produces squarefree varieties.

\section{Proof of the theorem}
\label{sec:proof}

The \emph{Weil polynomial} of an abelian variety $A$ over a finite field $\Fq$
is the characteristic polynomial of the Frobenius endomorphism of~$A$, acting,
say, on the $\ell$-adic Tate modules of~$A$. The Weil polynomial $f$ of $A$ lies
in $\ZZ[x]$, and if the dimension $n$ of $A$ is positive, then $f$ has the shape
\begin{equation}
\label{EQ:WP}
f = x^{2n} + a_1 x^{2n-1} + \cdots + a_{n-1} x^{n+1} + a_n x^n
     + q a_{n-1} x^{n-1} + \cdots + a_1 q^{n-1} x +  q^n.
\end{equation}
Furthermore, all of the complex roots of $f$ lie on the circle 
$\lvert z\rvert = \sqrt{q}$. The variety $A$ is \emph{ordinary} if $a_n$ is 
coprime to~$q$. Conversely, every polynomial $f\in\ZZ[x]$ of the 
shape~\eqref{EQ:WP} that has all of its roots on the circle 
$\lvert z\rvert = \sqrt{q}$ and with $a_n$ coprime to $q$ is the Weil polynomial
of an $n$-dimensional ordinary abelian variety over $\Fq$; this follows from the
Honda--Tate classification of Weil 
polynomials~\cite[Th\'eor\`eme~1, p.~96]{Tate1971}. This classification also
shows that an ordinary Weil polynomial is squarefree as a polynomial if and only
if the associated isogeny class is squarefree, in the sense defined in the
introduction.

The number of points on the variety $A$ is given by $f(1)$, so to prove
Theorem~\ref{T:main} we would like to have a large supply of Weil polynomials at
our disposal so that we can find or construct one whose value at~$1$ is a given
integer~$m$. There are a number of papers that give results that can be used to 
produce such polynomials --- see for example 
\cite{
vanBommelCostaEtAl2021,
Chen1995, 
DiPippoHowe1998,
Kwon2011, 
LakatosLosonczi2004, 
LakatosLosonczi2007, 
LakatosLosonczi2009, 
Schinzel2005, 
SinclairVaaler2008}. We will use Lemma~3.3.1 (p.~447)
of~\cite{DiPippoHowe1998}; for the convenience of the reader we reprove this
result here, as Corollary~\ref{C:EH condition}, using an argument suggested by
Bjorn Poonen. To state the result in terms helpful to us, we introduce some
notation.

Let $q$ be a prime power. If $(a_1,\ldots,a_n)$ is a finite sequence of
real numbers, we define the \emph{$q$-weight} $w_q((a_1,\ldots,a_n))$ of the
sequence to be the sum
\begin{equation}
\label{EQ:weight}
w_q((a_1,\ldots,a_n)) = \biggl\lvert \frac{a_n}{2 q^{n/2}}\biggr\rvert 
                     + \sum_{i=1}^{n-1} \, \biggl\lvert \frac{a_i}{q^{i/2}} \biggr\rvert.
\end{equation}

\begin{lemma} 
\label{L:EH condition}
Let $q$ be a prime power and let $(a_1,\ldots,a_n)$ be a sequence of real
numbers with $q$-weight at most~$1$. Then all of the complex roots of the
polynomial $f$ given by~\eqref{EQ:WP} lie on the circle 
$\lvert z\rvert = \sqrt{q}$. Moreover, if the $q$-weight of the sequence is
strictly less than~$1$, then the roots of $f$ are distinct, so that $f$ is
squarefree.
\end{lemma}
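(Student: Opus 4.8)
The plan is to reduce the statement to a question about the real roots of an auxiliary degree-$n$ polynomial obtained from $f$ by the substitution $x\mapsto x+q/x$, and then to locate those roots via the sign pattern of a Chebyshev-type trigonometric polynomial.

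First I would dispose of the trivial case $n=0$ (where $f=1$) and assume $n\ge 1$. Introduce the polynomials $P_0=2$, $P_1=u$, and $P_k=uP_{k-1}-qP_{k-2}$ for $k\ge 2$; an easy induction gives the identity $P_k(z+qz^{-1})=z^k+q^kz^{-k}$ for all $z\ne 0$. Reading off the coefficients of $f$ from~\eqref{EQ:WP} and pairing the term of degree $n+j$ with the term of degree $n-j$ (their coefficients are $a_{n-j}$ and $q^ja_{n-j}$, with $a_0:=1$), one obtains $f(x)=x^n g(x+q/x)$ where $g=P_n+a_1P_{n-1}+\cdots+a_{n-1}P_1+a_n$ is a real polynomial, monic of degree $n$ since $P_n$ is monic of degree $n$ and the remaining terms have lower degree. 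The key point about $\sigma\colon z\mapsto z+q/z$ is that $z$ is a root of $z^2-\sigma(z)z+q$, whose two roots have product $q$; such a pair lies on the circle $|z|=\sqrt q$ precisely when it is a complex-conjugate pair or a repeated real root $\pm\sqrt q$, i.e. precisely when $\sigma(z)$ is real and lies in $[-2\sqrt q,2\sqrt q]$ (a short direct computation with the quadratic). Hence all roots of $f$ lie on $|z|=\sqrt q$ if and only if all $n$ roots of $g$ lie in $[-2\sqrt q,2\sqrt q]$, and if $g$ has $n$ distinct roots in the open interval $(-2\sqrt q,2\sqrt q)$, then $f$ has $2n$ distinct roots on the circle and so is squarefree.

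Next I would rescale: put $h(t)=g(2\sqrt q\,t)/(2q^{n/2})$, a real polynomial of degree $n$ whose roots are those of $g$ divided by $2\sqrt q$, so it suffices to show $h$ has $n$ roots in $[-1,1]$ (and $n$ distinct roots in $(-1,1)$ for the squarefree assertion). Using the identity for $P_k$ with $z=\sqrt q\,e^{i\theta}$ — equivalently, using $T_j(\cos\theta)=\cos j\theta$ for the Chebyshev polynomials $T_j$ — one gets the crucial formula
\[
h(\cos\theta)=\cos n\theta+\sum_{i=1}^{n-1}\frac{a_i}{q^{i/2}}\cos\bigl((n-i)\theta\bigr)+\frac{a_n}{2q^{n/2}},
\]
so that by the triangle inequality $\bigl|h(\cos\theta)-\cos n\theta\bigr|\le w_q\bigl((a_1,\ldots,a_n)\bigr)\le 1$ for every $\theta$, directly from~\eqref{EQ:weight}. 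I would then prove the ``moreover'' statement first, as the cleanest case: if $w:=w_q((a_1,\ldots,a_n))<1$, evaluate $h$ at $t_k=\cos(k\pi/n)$ for $k=0,1,\ldots,n$, which satisfy $1=t_0>t_1>\cdots>t_n=-1$. Since $\cos(n\cdot k\pi/n)=(-1)^k$, the bound gives $(-1)^kh(t_k)\ge 1-w>0$, so $h$ takes strictly alternating signs at these $n+1$ points; by the intermediate value theorem $h$ has a root in each of the $n$ open intervals $(t_k,t_{k-1})$, giving $n$ distinct roots in $(-1,1)$, and since $\deg h=n$ these are all of them. By the first paragraph, $g$ then has $n$ distinct roots in $(-2\sqrt q,2\sqrt q)$, so all roots of $f$ lie on $|z|=\sqrt q$ and $f$ is squarefree. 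Finally, for a general sequence of $q$-weight $w\le 1$, apply the strict case to the scaled sequences $((1-\varepsilon)a_1,\ldots,(1-\varepsilon)a_n)$, $0<\varepsilon<1$, of $q$-weight $(1-\varepsilon)w<1$: the corresponding polynomials $h_\varepsilon=T_n+(1-\varepsilon)(h-T_n)$ have all their roots in $[-1,1]$, and since all the $h_\varepsilon$ share the same leading coefficient while their remaining coefficients depend affinely on $\varepsilon$, the multiset of roots of $h_\varepsilon$ converges to that of $h=h_0$; letting $\varepsilon\to 0$ shows all roots of $h$ lie in $[-1,1]$, hence all roots of $f$ lie on $|z|=\sqrt q$.

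The main obstacle I anticipate is the boundary case $w=1$: the sign pattern at the nodes $t_k$ is then only weakly alternating, and a root of $h$ at an interior node could in principle be ``shared'' between two adjacent intervals, so a naive intermediate-value count need not produce $n$ distinct roots. Handling the strict inequality first and then passing to the limit $\varepsilon\to 0$ sidesteps this cleanly; the only other point requiring care is the bookkeeping for the substitution $\sigma$ — that it is two-to-one from the circle $|z|=\sqrt q$ onto $[-2\sqrt q,2\sqrt q]$ and maps every other nonzero point off that interval — which is routine.
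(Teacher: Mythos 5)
Your proof is correct, but it routes the argument through an auxiliary object that the paper's proof avoids. You substitute $x\mapsto x+q/x$ to produce the ``real Weil polynomial'' $g$ (equivalently, after rescaling, a perturbation $h$ of the Chebyshev polynomial $T_n$), show $h$ has $n$ roots in $[-1,1]$ by sign alternation at the nodes $\cos(k\pi/n)$, and handle the boundary case $w_q=1$ by scaling the coefficients by $1-\varepsilon$ and invoking continuity of roots. The paper instead works with $g(z)=f(z)/z^n$ directly on the circle $\lvert z\rvert=\sqrt q$: this function is real-valued there, and at the $2n$ points $z=\sqrt q\,\zeta$ with $\zeta^{2n}=1$ its dominant term $z^n+\overline{z}^n=\pm 2q^{n/2}$ forces alternating signs when $w_q<1$, yielding $2n$ distinct zeros on the circle; the case $w_q=1$ is dispatched by a one-line continuity remark at the outset. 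The two arguments are close cousins --- under $z=\sqrt q\,e^{i\theta}$, $t=\cos\theta$, your nodes $t_k$ are exactly the paper's evaluation points, and your bound $\lvert h(\cos\theta)-\cos n\theta\rvert\le w_q$ is the paper's estimate on the non-leading terms --- but your version buys the explicit factorization $f=\prod_i(x^2-u_ix+q)$ with $u_i\in[-2\sqrt q,2\sqrt q]$ and a fully spelled-out limiting argument, at the cost of the $P_k$/Chebyshev bookkeeping, while the paper's is shorter because it never leaves the circle and never introduces $g$ as a polynomial in its own right. One small point worth keeping from your write-up: the observation that distinct monic quadratics $x^2-u_ix+q$ share no root, which justifies that distinct roots of $h$ give $2n$ distinct roots of $f$; the paper gets this for free since it counts zeros of $f$ on the circle directly.
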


\begin{proof}
By continuity, it suffices to prove the statement when 
$w_q((a_1,\ldots,a_n)) < 1$.

Consider the meromorphic function $g$ defined for $z\in\CC$ by 
$g(z) = f(z) / z^n$. We would like to show that $g$ has $2n$ distinct zeros on
the circle $\lvert z\rvert = \sqrt{q}$. For $z$ on this circle, we have
\begin{equation}
\label{EQ:circle}
g(z) = (z^{n} + \zb^n) + a_1 (z^{n-1} + \zb^{n-1}) + \cdots 
       + a_{n-1} (z + \zb) + a_n,
\end{equation}
so we see that $g$ is real-valued on this circle. Consider $z\in\CC$ of the form
$z = \sqrt{q}\,\zeta$, where $\zeta^{2n} = 1$. For such a $z$, the initial term 
$(z^{n} + \zb^n)$ in~\eqref{EQ:circle} is equal to $\pm2q^{n/2}$, where the sign
is equal to the value of $\zeta^n$. The absolute value of the sum of the other
terms in~\eqref{EQ:circle} is bounded above by 
$2q^{n/2} w_q((a_1,\ldots,a_n)) < 2q^{n/2}$, so $g(z)$ and $(z^{n} + \zb^n)$
have the same sign. Thus, we have $n$ points on the circle where the value of
$g$ is positive interlaced with $n$ points where the value is negative, so $g$
must have $2n$ distinct zeros on the circle.
\end{proof}

\begin{corollary}[{\cite[Lemma~3.3.1, p.~447]{DiPippoHowe1998}}] 
\label{C:EH condition}
Let $q$ be a prime power and let $(a_1,\ldots,a_n)$ be a sequence of integers
with $q$-weight at most~$1$ and with $a_n$ coprime to~$q$. Then the polynomial
$f$ given by~\eqref{EQ:WP} is the Weil polynomial of an $n$-dimensional ordinary
abelian variety over~$\Fq$. Moreover, if the $q$-weight of the sequence is
strictly less than~$1$, then the abelian variety will be squarefree.
\qed
\end{corollary}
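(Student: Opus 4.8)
The plan is simply to feed Lemma~\ref{L:EH condition} into the Honda--Tate description of Weil polynomials already recalled in this section. First I would note that because the $a_i$ are integers, the polynomial $f$ defined by~\eqref{EQ:WP} lies in $\ZZ[x]$ and has exactly the shape displayed in~\eqref{EQ:WP}; and since the $q$-weight of $(a_1,\ldots,a_n)$ is at most~$1$, Lemma~\ref{L:EH condition} tells me that all complex roots of $f$ lie on the circle $\lvert z\rvert = \sqrt q$. Combined with the hypothesis that $a_n$ is coprime to $q$, these are precisely the conditions under which the Honda--Tate classification (as quoted above from~\cite{Tate1971}) asserts that $f$ is the Weil polynomial of an $n$-dimensional ordinary abelian variety over~$\Fq$; that gives the first assertion. (Here $n \ge 1$, so that the shape~\eqref{EQ:WP} and the coprimality condition on $a_n$ make sense.)

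For the last sentence I would use the strict inequality. When $w_q((a_1,\ldots,a_n)) < 1$, the second half of Lemma~\ref{L:EH condition} says that $f$ has distinct roots, hence is squarefree as an element of $\ZZ[x]$. By the observation recorded earlier in this section --- that an ordinary Weil polynomial is squarefree as a polynomial exactly when the associated isogeny class is squarefree in the sense of the introduction --- the abelian variety attached to $f$ is squarefree, as claimed.

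I do not expect any genuine obstacle: the corollary is a direct translation of Lemma~\ref{L:EH condition} through the Honda--Tate dictionary, so the only things to verify are that the hypotheses of that dictionary (integral coefficients, the functional-equation shape~\eqref{EQ:WP}, all roots on $\lvert z\rvert = \sqrt q$, and $a_n$ coprime to~$q$) are all in force, which they are by construction together with the lemma. The mild care needed is just bookkeeping: keeping track of which statements are about the polynomial and which are about the variety, and noting that the ``ordinary'' and ``squarefree'' equivalences are being applied in the right direction.
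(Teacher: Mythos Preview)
Your proposal is correct and matches the paper's approach exactly: the paper gives no proof beyond a \qed, since the corollary is an immediate consequence of Lemma~\ref{L:EH condition} together with the Honda--Tate facts recalled earlier in the section. You have simply written out this implicit reasoning, including the squarefree-polynomial $\Leftrightarrow$ squarefree-isogeny-class observation already stated in the text.
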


We will say that a sequence $(a_1,\ldots,a_n)$ of integers \emph{represents} an
integer $m$ if we have
\begin{equation}
\label{EQ:rep}
m = (2^n + 1) + a_1(2^{n-1} + 1) + \cdots + a_{n-1}(2^1 + 1) + a_n.
\end{equation}
This condition means exactly that $m = f(1)$ for the polynomial $f$ given
in~\eqref{EQ:WP}, with $q=2$.

We see that to prove Theorem~\ref{T:main}, it will suffice to show that if $m>0$
is an integer less than $(4/3)2^d + 1$ for some $d>2$, then $m$ is represented
by a sequence $(a_1,\ldots,a_n)$ of integers of length at most $d$ and of
$2$-weight less than~$1$, and with $a_n$ odd: for if this is the case we will
have $m = f(1)$ for the polynomial $f$ given in~\eqref{EQ:WP} with $q=2$, and
$f$ will be the Weil polynomial of a squarefree ordinary abelian variety over
$\Ftwo$ by Corollary~\ref{C:EH condition}. Thus, the following lemma completes
the proof of Theorem~\ref{T:main}.

\begin{lemma}
\label{L:representation}
Let $m > 0$ and $d > 2$ be integers such that $m < (4/3)2^d + 1$. Then $m$ is
represented by a sequence $(a_1,\ldots,a_n)$ of length $n\le d$ and $2$-weight
less than $1$, and with $a_n$ odd.
\end{lemma}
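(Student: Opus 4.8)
The plan is to read the representation condition~\eqref{EQ:rep} as a \emph{non-adjacent form} expansion of approximately $m/2$, plus a low-order odd correction, and to construct that expansion directly. Writing the sought sequence as $(a_1,\dots,a_n)$ and setting $a_0=1$, equation~\eqref{EQ:rep} becomes
\[
m=\sum_{i=0}^{n-1}a_i(2^{\,n-i}+1)+a_n=2C+\Sigma+a_n,
\]
where $C=\sum_{i=0}^{n-1}a_i2^{\,n-1-i}$ is the value of the signed-digit string $a_0a_1\cdots a_{n-1}$ ($a_0$ most significant) and $\Sigma=\sum_{i=0}^{n-1}a_i$ is its digit sum. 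Two elementary facts drive everything. First, if this string is a non-adjacent form with $a_0=1$ (every $a_i\in\{-1,0,1\}$, no two consecutive nonzero, so $a_1=0$), then $C$ lies in $[\tfrac23 2^{n-1},\tfrac43 2^{n-1}]$ --- this is where the constant $\tfrac43=\tfrac1{1-1/4}$ enters --- so $2C$ sweeps $[\tfrac23 2^{n},\tfrac43 2^{n}]$, matching the range of $m$ in the statement. Second, for such a string $\sum_{i=1}^{n-1}|a_i|\,2^{-i/2}\le\sum_{j\ge1}2^{-j}=1$ with strict inequality by a definite margin, so adjoining an odd $a_n$ of bounded absolute value keeps the $2$-weight of $(a_1,\dots,a_n)$ below $1$. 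Hence, by Corollary~\ref{C:EH condition}, it suffices to exhibit, for the given $m$, an $n\le d$, a non-adjacent form $a_0\cdots a_{n-1}$ with $a_0=1$, and a small odd $a_n$ with $2C+\Sigma+a_n=m$.

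I would then induct on $d$. The base case is a finite computation over the bounded set of admissible $m$ for small $d$; it also absorbs the few small $m$ (most notably $m=1$) for which no non-adjacent form works and one resorts to a sequence with two consecutive nonzero entries. In the inductive step, the inductive hypothesis (and the remark that a sequence of length $\le d-1$ is also one of length $\le d$) reduces us to $m$ in the range $\tfrac43 2^{d-1}+1\le m<\tfrac43 2^d+1$, and for these I take $n=d$. The identity forces $2C\in[\tfrac23 2^{d},\tfrac43 2^{d}]$, i.e.\ $C$ must be close to $m/2$; I would choose $C$ near $\tfrac12(m-\Sigma(C))$ --- which is where $2C+\Sigma(C)$ is closest to $m$ --- and then put $a_n=m-2C-\Sigma(C)$.

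The step I expect to be the main obstacle is making that last choice work: one must pin down $C$ so that $a_n=m-2C-\Sigma(C)$ comes out \emph{odd} (it does precisely when the number of nonzero digits of the non-adjacent form of $C$ has parity opposite to that of $m$) while keeping $|a_n|$ within the $2$-weight budget from the second fact above. Because moving $C$ by one changes $2C$ by $2$ and changes the non-adjacent-form digit sum by a bounded amount, $a_n$ stays of bounded size over a short window of candidate $C$'s near $m/2$; the content is that within such a window one can always find a $C$ realizing the required parity --- and, when necessary, one whose non-adjacent form is not the weight-maximal pattern with nonzeros at positions $2,4,6,\dots$, so that the $a_n$-budget is not spent --- which follows by inspecting the recursion that computes the non-adjacent form. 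Some attention is also needed at the top of the interval, where $m/2$ is pressed against the largest admissible value $\tfrac43 2^{d-1}$ of $C$; but since $C$ may be taken anywhere down to roughly $\tfrac23 2^{d-1}$ there is still room for a suitable choice. Once $C$ and $a_n$ are fixed, $2C+\Sigma+a_n=m$ holds by design and the $2$-weight is $<1$, completing the induction.
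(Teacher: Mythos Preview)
Your plan shares the non-adjacent-form intuition with the paper, but the paper executes it by a much cleaner self-referential recursion rather than by building the NAF of $C\approx m/2$ directly and solving for $a_n$. After a finite table for $d\le 4$, the paper observes that for $m$ in the new range at $d=D$ one has $|m-(2^D+1)|<(4/3)2^{D-2}$; by the induction hypothesis this difference is represented by some $(b_1,\dots,b_n)$ with $n\le D-2$, $b_n$ odd, and $2$-weight $<1$. Then $(0,\dots,0,s,sb_1,\dots,sb_n)$ (with $s=\pm1$ the sign of $m-(2^D+1)$, and the $s$ landing in position $D-n\ge 2$) represents $m$, and its $2$-weight is $2^{-(D-n)/2}\bigl(1+w_2(b)\bigr)<\tfrac12(1+1)=1$. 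The single exceptional case $m=2^D+1$ is handled by $(0,\dots,0,1,-3)$. That is the whole inductive step: the ``non-adjacency'' and the oddness of the last entry are inherited automatically, and the weight bound is a one-line computation.

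By contrast, your route leaves the decisive step unproven. You need, for each $m$ in the top band, a $C$ in a short window near $m/2$ whose NAF has digit-count parity opposite to that of $m$ \emph{and} whose NAF is far enough from the weight-maximal pattern $0,\pm1,0,\pm1,\dots$ that $|a_n|/(2\cdot 2^{n/2})$ fits in the remaining budget. You acknowledge both issues but do not settle either; ``inspecting the recursion that computes the NAF'' is not yet an argument, and near the top of the interval (where $C$ is forced close to $\tfrac43 2^{d-1}$, the value with the extremal alternating NAF) the two constraints genuinely compete, since lowering $C$ to gain weight-slack inflates $|a_n|$. These difficulties are not insurmountable, but resolving them essentially reconstructs the paper's recursion in disguise; it is simpler to recurse on $|m-(2^D+1)|$ from the start.
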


\begin{proof}
We prove the lemma by induction on $d$. In Table~\ref{table} we give, for each
$m\le 22$, a sequence $(a_1, \ldots, a_n)$ of $2$-weight less than $1$ and with
$a_n$ odd that represents~$m$. We observe that the lengths of these sequences
are all at most $4$, and for $m\le 11$ the lengths are at most~$3$. Since
$\lfloor (4/3)2^3 + 1\rfloor = 11$ and $\lfloor (4/3)2^4 + 1\rfloor = 22$, this
proves the statement for $d = 3$ and $d = 4$. 

\begin{table}
\centering
\begin{tabular}{r r r c c r r r c}
\toprule
$m$ & $n$ &                 $a_1, \ldots, a_n$ & $2$-weight & \hbox to 2em{} & 
$m$ & $n$ & \hbox to 1em{}  $a_1, \ldots, a_n$ & $2$-weight \\
\cmidrule{1-4}\cmidrule{6-9}
 1 & 2 &         -1,   -1 & 0.957 & & 12 & 3 & 0, \ph0, \ph0,   -5 & 0.625 \\
 2 & 1 &               -1 & 0.354 & & 13 & 4 & 0, \ph0,   -1,   -1 & 0.479 \\
 3 & 2 &         -1, \ph1 & 0.957 & & 14 & 4 & 0, \ph0, \ph0,   -3 & 0.375 \\
 4 & 2 &       \ph0,   -1 & 0.250 & & 15 & 4 & 0, \ph0,   -1, \ph1 & 0.479 \\
 5 & 3 & \ph0,   -1,   -1 & 0.677 & & 16 & 4 & 0, \ph0, \ph0,   -1 & 0.125 \\
 6 & 2 &       \ph0, \ph1 & 0.250 & & 17 & 4 & 0, \ph0,   -1, \ph3 & 0.729 \\
 7 & 3 & \ph0,   -1, \ph1 & 0.677 & & 18 & 4 & 0, \ph0, \ph0, \ph1 & 0.125 \\
 8 & 3 & \ph0, \ph0,   -1 & 0.177 & & 19 & 4 & 0, \ph0, \ph1,   -1 & 0.479 \\
 9 & 2 &       \ph1, \ph1 & 0.957 & & 20 & 4 & 0, \ph0, \ph0, \ph3 & 0.375 \\
10 & 3 & \ph0, \ph0, \ph1 & 0.177 & & 21 & 4 & 0, \ph0, \ph1, \ph1 & 0.479 \\
11 & 3 & \ph0, \ph1,   -1 & 0.677 & & 22 & 4 & 0, \ph0, \ph0, \ph5 & 0.625 \\
\bottomrule
\end{tabular}
\vskip2ex
\caption{For each $m\le 22$ we give a sequence $(a_1,\ldots, a_n)$ that
represents $m$ as in~\eqref{EQ:rep}, together with its $2$-weight, rounded to
three decimal places.}
\label{table}
\end{table}

Now suppose the statement of the lemma is true for all $d < D$, where $D\ge 5$,
and consider an integer $m$ with $m < (4/3)2^{D} + 1$. If $m < (4/3)2^{D-1} + 1$
then the conclusion of the lemma is true by the induction hypothesis, so we may
assume that $m > {(4/3)2^{D-1} + 1} = {(2/3)2^D + 1}$. Then
\[
\lvert m - (2^D + 1) \rvert < (1/3) 2^D  = (4/3)2^{D-2}.
\]
If $m = 2^D + 1$ then $m$ has a representation of length $D$ given by
$(0,\ldots,0,1,-3)$, and the $2$-weight of this sequence is
$1/2^{(D-1)/2} + 3/2^{(D+2)/2} < 1$. If $m \neq 2^D + 1$, then by the induction
hypothesis $\lvert m- (2^D + 1)\rvert$ is represented by a sequence 
$(b_1,\ldots,b_n)$ with $n \le D-2$, with $2$-weight less than~$1$, and with
$b_n$ odd. Let $s =\pm 1$ be the sign of $m- (2^D + 1)$. Then $m$ is represented
by the length-$D$ sequence $(0, \ldots, 0, s, s b_1, \ldots, s b_n)$, where the
initial $s$ occurs at position $D - n\ge 2$. We compute that then
\begin{align*}
w_2((0, \ldots, 0, s, s b_1, \ldots, s b_n)) 
  &= \frac{1}{2^{(D-n)/2}} \bigl( 1 + w_2(( b_1, \ldots,  b_n))\bigr)\\[1ex]
  &< (1/2)(1 + 1) = 1.
\end{align*}
This completes the induction and proves the lemma.
\end{proof}

\section{Remarks}
\label{sec:comments}

\begin{remark}
When $m \ge 10$, the sequence $(a_1,\ldots,a_n)$ produced by the proof of 
Lemma~\ref{L:representation} satisfies $a_1 = 0$, $\lvert a_i\rvert \le 1$ for
all $i<n$, and $a_i a_{i+1} = 0$ for all $i < n-2$. Thus, the
representation~\eqref{EQ:rep} of $m$ by the sequence $(a_1,\ldots,a_n)$ is
closely related in spirit to the signed binary representations of integers
described in \cite[\S 8]{Reitwiesner1960}; these are commonly known as
\emph{balanced binary representations} or 
\emph{NAF \textup(non-adjacent form\textup) representations}, particularly in
literature on efficient arithmetic in elliptic curve cryptography (for 
example,~\cite{JoyeTymen2001}).
\end{remark}

\begin{remark}
Using a more na\"{\i}ve construction of sequences, DiPippo and
Howe~\cite[Exercise~3.3.2, p.~450]{DiPippoHowe1998} show that for $d>1$, every
integer $m$ with $\lvert m - (2^d+1)\rvert\le (7/64)2^d$ is represented by a
sequence of length $d$ and $2$-weight at most~$1$. Our use of the non-adjacent
form construction allows us to replace $7/64$ with $1/3$ in this inequality when
$d>3$, and this allows us to cover all integers.
\end{remark}

\begin{remark}
One obstacle to determining which integers occur as the group orders of abelian
varieties over $\Fq$ is the difficulty of parametrizing the Weil polynomials of
$n$-dimensional abelian varieties over $\Fq$. The coefficient space of these
Weil polynomials can be viewed as a set of lattice points inside an explicitly
given region $V_n$ of $\RR^n$ that is homeomorphic to a simplex; however, as $n$
increases the simplex is stretched in ways that make it more and more difficult
to analyze the lattice points it contains. See~\cite{DiPippoHowe1998} for a
discussion of these regions.
\end{remark}


\nocite{DiPippoHowe2000}
\bibliography{AVF2}

\newcommand{\SortNoop}[1]{}
\providecommand{\bysame}{\leavevmode\hbox to3em{\hrulefill}\thinspace}
\providecommand{\xxMR}[2]{\relax\ifhmode\unskip\space\fi
  \href{http://www.ams.org/mathscinet-getitem?mr=#2}{MR~#1}}
\providecommand{\xxZBL}[1]{\relax\ifhmode\unskip\space\fi
  \href{http://www.emis.de/cgi-bin/MATH-item?#1}{ZBL~#1}}
\providecommand{\xxJFM}[1]{\relax\ifhmode\unskip\space\fi
  \href{http://www.emis.de/cgi-bin/JFM-item?#1}{JFM~#1}}
\providecommand{\xxARXIV}[2]{\relax\ifhmode\unskip\space\fi
  \href{http://arxiv.org/abs/#2}{arXiv:#1}}
\providecommand\bibmarginpar{\leavevmode\marginpar}
\providecommand{\href}[2]{#2}
\begin{thebibliography}{10}

\bibitem{vanBommelCostaEtAl2021}
Raymond {\SortNoop{Bommel}}~van Bommel, Edgar Costa, Wanlin Li, Bjorn Poonen,
  and Alexander Smith, \emph{Abelian varieties of prescribed order over finite
  fields}, 2021. \xxARXIV{2106.13651}{2106.13651}

\bibitem{Chen1995}
Weiyu Chen, \href{http://dx.doi.org/10.1006/jmaa.1995.1105} {\emph{On the
  polynomials with all their zeros on the unit circle}}, J. Math. Anal. Appl.
  \textbf{190} (1995), no.~3, 714--724. \xxMR{1318593}{1318593}

\bibitem{DiPippoHowe1998}
Stephen~A. DiPippo and Everett~W. Howe,
  \href{http://dx.doi.org/10.1006/jnth.1998.2302} {\emph{Real polynomials with
  all roots on the unit circle and abelian varieties over finite fields}}, J.
  Number Theory \textbf{73} (1998), no.~2, 426--450, Corrigendum in
  \cite{DiPippoHowe2000}. \xxMR{1657992}{1657992}

\bibitem{DiPippoHowe2000}
\bysame, \href{http://dx.doi.org/10.1006/jnth.2000.2565} {\emph{Corrigendum:
  ``{R}eal polynomials with all roots on the unit circle and abelian varieties
  over finite fields'' [{J}. {N}umber {T}heory {\bf 73} (1998), no. 2,
  426--450]}}, J. Number Theory \textbf{83} (2000), no.~1, 182.
  \xxMR{1767658}{1767658}

\bibitem{JoyeTymen2001}
Marc Joye and Christophe Tymen,
  \href{http://dx.doi.org/10.1007/3-540-44586-2\_26} {\emph{Compact encoding of
  non-adjacent forms with applications to elliptic curve cryptography}}, Public
  key cryptography ({C}heju {I}sland, 2001), Lecture Notes in Comput. Sci.,
  vol. 1992, Springer, Berlin, 2001, pp.~353--364. \xxMR{1898046}{1898046}

\bibitem{Kadets2021}
Borys Kadets, \href{http://dx.doi.org/10.1007/s00209-020-02520-w}
  {\emph{Estimates for the number of rational points on simple abelian
  varieties over finite fields}}, Math. Z. \textbf{297} (2021), no.~1-2,
  465--473. \xxMR{4204701}{4204701}

\bibitem{Kwon2011}
D.~Y. Kwon, \href{http://dx.doi.org/10.1007/s10474-011-0090-6}
  {\emph{Reciprocal polynomials with all zeros on the unit circle}}, Acta Math.
  Hungar. \textbf{131} (2011), no.~3, 285--294. \xxMR{2786774}{2786774}

\bibitem{LakatosLosonczi2004}
Piroska Lakatos and L\'{a}szl\'{o} Losonczi, \emph{Self-inversive polynomials
  whose zeros are on the unit circle}, Publ. Math. Debrecen \textbf{65} (2004),
  no.~3-4, 409--420. \xxMR{2107957}{2107957}

\bibitem{LakatosLosonczi2007}
\bysame, \href{http://dx.doi.org/10.7153/mia-10-71} {\emph{Circular interlacing
  with reciprocal polynomials}}, Math. Inequal. Appl. \textbf{10} (2007),
  no.~4, 761--769. \xxMR{2358663}{2358663}

\bibitem{LakatosLosonczi2009}
\bysame, \href{http://dx.doi.org/10.1007/s10474-009-9028-7} {\emph{Polynomials
  with all zeros on the unit circle}}, Acta Math. Hungar. \textbf{125} (2009),
  no.~4, 341--356. \xxMR{2564433}{2564433}

\bibitem{LMFDB}
The {LMFDB Collaboration}, \emph{The {L}-functions and modular forms database},
  \url{http://www.lmfdb.org}, accessed 26 March 2021.

\bibitem{MadanPal1977}
Manohar~L. Madan and Sat Pal,
  \href{http://resolver.sub.uni-goettingen.de/purl?GDZPPN00219340X}
  {\emph{Abelian varieties and a conjecture of {R}. {M}. {R}obinson}}, J. Reine
  Angew. Math. \textbf{291} (1977), 78--91. \xxMR{439848}{439848}

\bibitem{MarsegliaSpringer2021}
Stefano Marseglia and Caleb Springer, \emph{Every finite abelian group is the
  group of rational points of an ordinary abelian variety over $\mathbb{F}_2$},
  2021. \xxARXIV{2105.08125}{2105.08125}

\bibitem{Reitwiesner1960}
George~W. Reitwiesner, \href{http://dx.doi.org/10.1016/S0065-2458(08)60610-5}
  {\emph{Binary arithmetic}}, Advances in computers, {V}ol. 1, Academic Press,
  New York, 1960, pp.~231--308. \xxMR{0122018}{0122018}

\bibitem{Schinzel2005}
A.~Schinzel, \href{http://dx.doi.org/10.1007/s11139-005-0821-9}
  {\emph{Self-inversive polynomials with all zeros on the unit circle}},
  Ramanujan J. \textbf{9} (2005), no.~1-2, 19--23. \xxMR{2166374}{2166374}

\bibitem{SinclairVaaler2008}
Christopher~D. Sinclair and Jeffrey~D. Vaaler,
  \href{http://dx.doi.org/10.1017/CBO9780511721274.020} {\emph{Self-inversive
  polynomials with all zeros on the unit circle}}, Number theory and
  polynomials, London Math. Soc. Lecture Note Ser., vol. 352, Cambridge Univ.
  Press, Cambridge, 2008, pp.~312--321. \xxMR{2428529}{2428529}

\bibitem{Tate1971}
John Tate, \href{http://dx.doi.org/10.1007/BFb0058807} {\emph{Classes
  d'isog\'{e}nie des vari\'{e}t\'{e}s ab\'{e}liennes sur un corps fini
  (d'apr\`es {T}. {H}onda)}}, S\'{e}minaire {B}ourbaki. {V}ol. 1968/69:
  {E}xpos\'{e}s 347--363, Lecture Notes in Math., vol. 175, Springer, Berlin,
  1971, pp.~95--110, Exp. No. 352. \xxMR{3077121}{3077121}

\end{thebibliography}
\bibliographystyle{hplaindoi}

\end{document}